\documentclass[11pt, letterpaper]{article}

\usepackage[utf8]{inputenc}
\usepackage{amsmath, amssymb, amsthm, stmaryrd, dsfont, mathrsfs}
\usepackage{ytableau}
\usepackage{fullpage}
\usepackage{graphicx}

\newtheorem{thm}{Theorem}
\newtheorem{prop}[thm]{Proposition}
\newtheorem{cor}[thm]{Corollary}

\theoremstyle{definition}
\newtheorem{defn}[thm]{Definition}

\newtheorem{obs}[thm]{Observation}
\newtheorem{remark}[thm]{Remark}
\theoremstyle{remark}

\newcommand{\flr}[1]{\lfloor #1 \rfloor}
\newcommand{\clg}[1]{\lceil #1 \rceil}
\newcommand{\N}{\mathbb{N}}

\newcommand{\vcenteredinclude}[2]{\begingroup
\setbox0=\hbox{\includegraphics[#1]{#2}}%
\parbox{\wd0}{\box0}\endgroup}

\title{Ramsey Numbers of Interval 2-chromatic Ordered Graphs}
\author{
Dana Neidinger\thanks{Department of Mathematics, University of Illinois,
dn2@illinois.edu}\: and
Douglas B. West\thanks{Departments of Mathematics, Zhejiang Normal University
\& University of Illinois, dwest@math.uiuc.edu.  Research supported by
Recruitment Program of Foreign Experts, 1000 Talent Plan, State Administration
of Foreign Experts Affairs, China.}
}
\date{February 2018}

\begin{document}

\maketitle

\noindent
\normalsize

\begin{abstract} 
An \emph{ordered graph} $G$ is a graph together with a specified linear ordering on the vertices, and its \emph{interval chromatic number} is the minimum number of independent sets consisting of consecutive vertices that are needed to partition the vertex set. The \emph{$t$-color Ramsey number} $R_t(G)$ of an ordered graph $G$ is the minimum number of vertices of an ordered complete graph such that every edge-coloring from a set of $t$ colors contains a monochromatic copy of $G$ such that the copy of $G$ preserves the original ordering on $G$.
An ordered graph is \emph{$k$-ichromatic} if it has interval chromatic number $k$. We obtain lower bounds linear in the number of vertices for the Ramsey numbers of certain classes of 2-ichromatic ordered graphs. We also determine the exact value of the $t$-color Ramsey number for two families of 2-ichromatic ordered graphs, and we prove a linear upper bound for a class of 2-ichromatic ordered graphs.  
\end{abstract}

\section{Introduction}

An \emph{ordered graph} $G$ is a graph together with a specified linear ordering on the vertices. We can view the vertices as placed along a horizontal line. 
An \emph{interval coloring} of an ordered graph $G$ is a partition of $V(G)$ into independent sets of consecutive vertices (intervals); we call these sets \emph{parts}.
The \emph{interval chromatic number} of an ordered graph $G$, written $\chi_i(G)$, is the minimum number of parts in an interval coloring of $G$. When the interval chromatic number is $k$, we say that $G$ is \emph{interval $k$-chromatic} or simply \emph{$k$-ichromatic}. 
Note that since the parts of an interval coloring are independent sets, always $\chi_i(G)$ is at least the chromatic number.

The $t$-color \emph{Ramsey number} of a graph $G$ is the minimum number of vertices of a complete graph such that every edge-coloring from a set of $t$ colors contains a monochromatic copy of $G$. An ordered graph $G$ is a \emph{subgraph} of an ordered graph $G'$ if there is an order-preserving injection from the vertices of $G$ to the vertices of $G'$ that preserves edges; we then also say that $G$ is \emph{contained} in $G'$.
Two ordered graphs are \emph{isomorphic} if each is contained in the other. 
Just as in the unordered setting, the \emph{t-color Ramsey number} of an ordered graph $G$, which we write as $R_t(G)$, is the minimum number of vertices of an ordered complete graph such that every edge-coloring from a set of $t$ colors contains a copy of $G$ in color $i$ for some $i$. Unless noted otherwise, we use the term \emph{Ramsey number} to refer to the 2-color Ramsey number, denoted $R(G)$. Although ``Ramsey number" usually refers to the Ramsey number for unordered graphs, in this paper we only consider ordered graphs, so $R(G)$ will always refer to the ordered setting. 

Ramsey's Theorem \cite{ramsey} implies that for $k \in \N$, there exists $n$ such that every 2-edge-colored ordered complete graph with $n$ vertices contains a monochromatic ordered complete graph with $k$ vertices. Since an ordered complete graph contains every ordered graph with the same number of vertices, and there is only one isomorphism class of ordered complete graphs with a given number of vertices, Ramsey's Theorem implies that the Ramsey number is well-defined for every ordered graph. Moreover, the Ramsey number is monotone on ordered graphs: if an ordered graph $G$ is contained in an ordered graph $H$, then $R(G) \leq R(H)$.

 The \emph{extremal number} or \emph{Tur\'an number} of a graph $H$, written $\text{ex}(n,H)$, is the maximum number of edges an $n$-vertex graph can have without containing $H$. Similarly, the \emph{extremal number} of an ordered graph $H$, which we also write as $\text{ex}(n,H)$, is the maximum number of edges an ordered $n$-vertex graph can have without containing $H$. Extremal numbers are closely related to Ramsey numbers, because if there are more than $\text{ex}(n,H)$ edges of one color, then that color must contain a monochromatic copy of $H$.  \\


Ordered Ramsey theory is a relatively recent but increasingly popular area of study. Interest in Ramsey numbers of ordered graphs arose from the well-known Erd\H{o}s-Szekeres Lemma \cite{erdosszekeres}, which states that every sequence of at least $(k-1)^2+1$ distinct integers contains a decreasing or increasing subsequence of length $k$. The \emph{monotone path} with $n$ vertices, written as $P_n^{\text{mon}}$, uses the vertex ordering $v_1, \ldots, v_n$ such that along the path the vertices are $v_1, \ldots, v_n$. 
The Erd\H{o}s-Szekeres Lemma is equivalent to $R(P_n^{\text{mon}}) = (n-1)^2+1$ \cite{FPSS}. This relationship was used to generalize the Erd\H{o}s-Szekeres Lemma to ordered Ramsey numbers in various ways \cite{FPSS, west, MS, Mub}.
Ramsey numbers of specific classes of ordered graphs were explored in \cite{balkocibulka, conlonfox, CS}.

Pach and Tardos \cite{pachtardos} studied the extremal numbers of $k$-ichromatic ordered graphs, proving that for any ordered graph $H$, the maximum number of edges that an $H$-free ordered graph with $n$ vertices can have is
$$\text{ex}(n,H) = \left(1-\frac{1}{\chi_i(H)-1}\right) \binom{n}{2}+o(n^2),$$
where $\chi_i(H)$ is the interval chromatic number of $H$. 

The \emph{alternating path} $P^{\text{alt}}_n$ uses the vertex ordering $v_1, \ldots, v_n$ such that along the path the vertices are $v_1,v_n,v_2,v_{n-1}, \ldots$. The ordered graph $P^{\text{alt}}_n$ is 2-ichromatic, because the first $\clg{n/2}$ vertices form an independent set, as do the last $\flr{n/2}$.  
Balko, Cibulka, Kr\'{a}l, and Kyn\v{c}l \cite{balkocibulka} conjectured that $P^{\text{alt}}_n$, whose Ramsey number they showed grows linearly in $n$, has the smallest Ramsey number among all ordered paths with $n$ vertices. 
The monotone path $P^{\text{mon}}_n$ has the largest interval chromatic number among all orderings of $P_n$, since every vertex must be in an interval separate from its neighbors, and above we mentioned the observation of \cite{FPSS} that $R(P^\text{mon}_n)$ is quadratic in $n$.
 
One may then think that as the interval chromatic number increases, the Ramsey number also increases. However, this is false, since Balko et al. \cite{balkocibulka} proved that for arbitrarily large $n$, there are ordered matchings $M$ on $n$ vertices such that $R(M) \geq n^{\frac{\log n}{5\log \log n}}$. An ordered matching has interval chromatic number at most $n/2$. Given such an ordered matching $M$, we can add edges to $M$ to create an ordered $n$-vertex path $P^\circ_n$ with interval chromatic number less than $n$ (avoid creating the monotone path). 
Thus by the monotonicity of the Ramsey number, $R(P^\circ_n) \geq n^{\frac{\log n}{5\log \log n}}$; the lower bound is superpolynomial in $n$.
Because $P_n^\text{mon}$ has interval chromatic number $n$ and quadratic Ramsey number, the Ramsey number on ordered paths is not monotone with respect to the interval chromatic number.
\\

We consider 2-ichromatic ordered graphs. We extend the ideas in the proof by Balko et al. \cite{balkocibulka} that the Ramsey number of the alternating path is linear in the number of vertices. We extend their lower bound methods, generalizing the alternating path to a large class of 2-ichromatic ordered graphs. The lower bound we obtain applies more generally, including many 2-ichromatic ordered graphs with many fewer edges than the alternating path. 

\begin{defn}
A $k$-ichromatic ordered graph is \emph{stitched} if the set of size $2k$ consisting of the first and last vertices from each part lies in a single component of the graph.  
\end{defn}

All connected $k$-ichromatic ordered graphs are stitched, but stitched ordered graphs need not be connected, which is the reason for introducing a new term. 
For a connected 2-ichromatic ordered graph $G$,the parts of the unique interval 2-coloring are the same as the color classes of $G$ as a bipartite graph. 
We depend heavily on the following slightly more general observation:

\begin{obs}
A stitched $2$-ichromatic ordered graph has a unique interval 2-coloring, meaning that the parts are uniquely determined.
\end{obs}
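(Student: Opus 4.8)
The plan is to work directly with interval $2$-colorings. Writing the vertex order as $v_1,\dots,v_n$, an interval $2$-coloring of $G$ is the same thing as a \emph{cut point} $c$ with $1\le c\le n-1$ for which both $\{v_1,\dots,v_c\}$ and $\{v_{c+1},\dots,v_n\}$ are independent; since $\chi_i(G)=2$ forces $G$ to have an edge, both parts of any interval $2$-coloring are nonempty, so at least one valid cut point exists. Thus the statement reduces to showing that a stitched $2$-ichromatic ordered graph has exactly one valid cut point.

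The one structural fact I would isolate is this: if $a<b$ are both valid cut points, then every vertex $v_m$ with $a<m\le b$ is isolated in $G$. Indeed, $v_m$ lies in the (independent) first part for the cut $b$, so it has no neighbour among $v_1,\dots,v_b$; and $v_m$ lies in the (independent) second part for the cut $a$, so it has no neighbour among $v_{a+1},\dots,v_n$; since $a<b$, these two ranges together cover all of $V(G)$, so $v_m$ has no neighbour at all.

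Now suppose, for contradiction, that a stitched $2$-ichromatic ordered graph $G$ had two distinct valid cut points. Let $a$ and $b$ be the smallest and largest valid cut points; then $a<b$, so by the fact above each of $v_{a+1},\dots,v_b$ is isolated. Fix any valid cut point $c$, so $a\le c\le b$; the size-$4$ set of first and last vertices of the two parts of the corresponding coloring is $S_c=\{v_1,v_c,v_{c+1},v_n\}$, and it contains the two distinct vertices $v_c$ and $v_{c+1}$. If $c\ge a+1$ then $v_c$ is isolated; if $c=a$ then $v_{c+1}=v_{a+1}$ is isolated (here $a+1\le b$ because $a<b$). Either way $S_c$ meets a singleton component of $G$ while $|S_c|\ge 2$, so $S_c$ is not contained in any single component. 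Since this holds for every valid cut point $c$, the graph $G$ is not stitched, a contradiction. Hence the valid cut point, and therefore the interval $2$-coloring, is unique.

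The argument is short, and the only real subtlety is that the definition of \emph{stitched} refers to ``the parts'' before their uniqueness is known; I would read it as saying that $G$ is stitched if the set of first and last vertices of the parts lies in one component for \emph{some} interval $2$-coloring, after which the argument above excludes non-uniqueness no matter which coloring witnesses the stitched condition. I also note that the proof uses in an essential way that an interval $2$-coloring is encoded by a single cut point, so it is genuinely specific to the case $k=2$.
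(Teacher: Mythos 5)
Your proof is correct. Note that the paper offers no argument at all for this statement---it is labeled an Observation and left unproved, with only the remark that a \emph{connected} $2$-ichromatic ordered graph has parts equal to its bipartition classes---so there is no ``paper proof'' to match; you have supplied a complete justification where the authors supplied none. Your route, via the lemma that any two distinct valid cut points $a<b$ force every vertex $v_m$ with $a<m\le b$ to be isolated, is elementary and has the added virtue of resolving the definitional circularity you flag at the end: it shows that non-uniqueness makes the stitched condition fail for \emph{every} interval $2$-coloring, so the observation holds under any reading of ``the parts.'' The argument the authors presumably had in mind is slightly different: the component $H$ containing the four extreme vertices is connected and bipartite, hence has a unique bipartition, and since the last vertex of the first part and the first vertex of the second part are consecutive, lie in $H$, and fall in different classes of that bipartition, any interval $2$-coloring must cut exactly between them. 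That version is shorter but tacitly fixes one coloring to define the extreme vertices; your version does a bit more work and closes that loophole. Either is acceptable; yours is the more careful of the two.
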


In Section \ref{mainsection}, we prove that  if $G$ is a stitched 2-ichromatic ordered graph with parts of size $m$ and $n$, then $R(G) \geq 4(\min(m,n)-1) +1$. Moreover, if $G$ also satisfies certain additional conditions, then $R(G) \geq 5(\min(m,n)-1)+1$. In Section \ref{exact}, we give exact formulas for the Ramsey numbers of two specific families of 2-ichromatic ordered graphs that are not stitched. In Section \ref{extension}, we extend our results to $t$ colors, and  we extend the upper bound given by Balko et al. \cite{balkocibulka} for a special family of ordered graphs.

\section{Matrices associated with ordered graphs}

We prove our results by converting the graph problems to matrix problems. The \emph{adjacency matrix} of an ordered graph $G$ with vertices $v_1, \ldots, v_N$ in order is an $N \times N$ $\{0,1\}$-matrix $M$ where the rows and columns are indexed by the vertices, and an entry $a_{ij}$ is $1$ if and only if $v_iv_j$ is an edge in $G$. Since we consider only undirected graphs, these matrices are symmetric along the main diagonal.

The structure of the adjacency matrix of a 2-ichromatic ordered graph is shown in Figure \ref{M}. If the parts have sizes $m$ and $n$, then the nonzero entries of the adjacency matrix are contained in two off-diagonal blocks $A_1$ and $A_2$, where $A_1$ is indexed by rows $1$ through $m$ and columns $m+1$ through $m+n$ and $A_2$ is the transpose of $A_1$. Thus any 2-ichromatic ordered graph $G$ with uniquely determined parts can be described by the single $m \times n$ matrix $A_1$, which we call the \emph{core matrix of $G$}.

\begin{figure}[h]
$$\begin{array}{c|ccc|ccc|}
& 1 & \cdots & m & m+1 & \cdots & m+n \\ \hline
1 &&&&&&\\
\vdots & & 0 & & & A_1 & \\
m &&&&&&\\ \hline
m+1 &&&&&&\\
\vdots & & A_2 & & & 0 & \\
m+n &&&&&&\\
\hline
\end{array}$$
\caption{The adjacency matrix of $A$ a 2-ichromatic ordered graph with part-sizes $n$ and $m$.}\label{M}
\end{figure}

We can view a 2-edge-coloring of the complete ordered graph $K_N$ on $N$ vertices as a symmetric $N \times N$ red/blue matrix $M$, where each entry of $M$ corresponds to an edge of $K_N$. Call an $s \times t$ submatrix $B'$ of $M$ a \emph{monochromatic copy} of an $s \times t$ $\{0,1\}$-matrix $B$ in $M$ if the nonzero entries of $B$ correspond to a set of entries in $B'$ all having the same color.

A \emph{principal submatrix} is one whose row indices and column indices are the same. If we consider a 2-ichromatic ordered graph $G$ as a subgraph of $K_N$, a copy of the adjacency matrix $A$ of $G$ will appear as a principal submatrix $A'$ of $M$. Let $A_1'$ and $A_2'$ be the corresponding copies of submatrices $A_1$ and $A_2$ of $A$ (as shown in Figure \ref{M}). Due to the ordering of $G$, $A_1'$ is above the diagonal and is completely above and to the right of every entry in $A_2'$. 
\\

In Section \ref{extension} we make use the analogue for matrices of Tur\'{a}n numbers. A $\{0,1\}$-matrix $M$ \emph{contains} an $r \times s$ $\{0,1\}$-matrix $A$ if $M$ has a $r \times s$ submatrix $A'$ that has a 1 in every position where $A$ has a 1, with the rows and columns in the same order as in $A$. A matrix \emph{avoids} $A$ if it does not contain $A$. A matrix $A$ is \emph{tightly contained} in a matrix $M$ if $M$ contains $A$ and has the same dimensions as $A$.
The \emph{extremal number} $\text{ex}_A(m,n)$ of a $\{0,1\}$-matrix $A$ is the maximum number of nonzero entries in an $m \times n$ $\{0,1\}$-matrix avoiding $A$. An $r \times s$ matrix $A$ is \emph{minimalist} if $\text{ex}_A(m,n) = (s-1)m+(r-1)n-(r-1)(s-1)$. The term ``minimalist" was introduced by F\"{u}redi and Hajnal \cite{furedihaijnal} because for each choice of $m$ and $n$, the value $(s-1)m+(r-1)n-(r-1)(s-1)$ is the smallest extremal number of any $r \times s$ $\{0,1\}$-matrix $A$ having at least one nonzero entry.
We say also that a 2-ichromatic ordered graph $G$ is \emph{minimalist} if the core matrix of $G$ is minimalist. 

One can build larger minimalist matrices from existing minimalist matrices by a method of F\"{u}redi and Hajnal \cite{furedihaijnal}. 
An elementary operation on a $\{0,1\}$-matrix $M$ produces a $\{0,1\}$-matrix $M'$
by adding a new first or last row or column containing a single 1 next to a 1 of $M$.
F\"{u}redi and Hajnal proved that if $M$ is a minimalist matrix and $M'$ is obtained from $M$ by applying elementary operations, then $M'$ is minimalist. 
Since the $1 \times 1$ identity matrix is clearly minimalist, this is one way to create minimalist matrices. Also, any nonzero matrix tightly contained in a minimalist matrix is minimalist.
However, these are not all the minimalist matrices; the matrix below is minimalist but neither is created by applying elementary operations to the $1 \times 1$ identity matrix nor is tightly contained in such a matrix \cite{furedihaijnal}.

\[\begin{bmatrix}
0 & 1 & 0 \\
1 & 0 & 0 \\
1 & 0 & 1
\end{bmatrix}\]

The ordered path $P^{\text{alt}}_n$ is minimalist. Balko et al. \cite{balkocibulka} used this fact in proving an upper bound on $R(P^{\text{alt}}_n)$. In Section \ref{extension} we extend their result to $t$ colors and all minimalist 2-ichromatic ordered graphs.


\section{Lower bounds for stitched 2-ichromatic ordered graphs}\label{mainsection}

\begin{thm} Let $G$ be a stitched $2$-ichromatic ordered graph. If the parts have size $m$ and $n$, then $R(G) \geq 4r+1$, where $r = \min(m,n)-1$. \label{mainthm}
\end{thm}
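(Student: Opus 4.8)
The plan is to produce, for $N=4r$, an explicit red/blue edge-coloring of the ordered complete graph on $N$ vertices in which neither color class contains a copy of $G$; this gives $R(G)\ge 4r+1$. After possibly reversing the vertex order of $G$ (which preserves being stitched and $2$-ichromatic and merely swaps the two part sizes), I may assume the first part $P_1$ has size $p=r+1$ and the second part $P_2$ has size $q$ with $q\ge p$. If $p=1$ then $4r+1=1$ and there is nothing to prove, so assume $p\ge 2$.

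For the coloring I would split the vertex set $\{1,\dots,4r\}$ into four consecutive blocks $B_1,B_2,B_3,B_4$ of size $r$ each, set $X=B_1\cup B_3$ and $Y=B_2\cup B_4$, and color an edge red if both endpoints lie in $X$ or both lie in $Y$, and blue otherwise. Thus the red graph is a disjoint union of a clique on $X$ and a clique on $Y$, while the blue graph is complete bipartite between $X$ and $Y$ — but with the two sides interleaved in the order $B_1,B_2,B_3,B_4$ along the vertex line. (Equivalently, the edge $ij$ is red exactly when $\lceil i/r\rceil\equiv\lceil j/r\rceil\pmod 2$.)

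Now suppose, for contradiction, that an order-preserving embedding $\phi$ yields a copy of $G$ in some color $c$, and put $U=\phi(P_1)$ and $W=\phi(P_2)$. Order-preservation gives $|U|=p$, $|W|=q$, $\max U<\min W$, and pins the images of the four extremal vertices: $\min U=\phi(v_1)$, $\max U=\phi(v_p)$, $\min W=\phi(v_{p+1})$, $\max W=\phi(v_{p+q})$. Since $G$ is stitched, these four vertices lie in one component $G_0$ of $G$, so $\phi(G_0)$ is a connected subgraph all of whose edges have color $c$. If $c$ is red, connectedness forces $\phi(G_0)\subseteq X$ or $\phi(G_0)\subseteq Y$; if $c$ is blue, connectedness together with uniqueness of the bipartition of the connected bipartite graph $G_0$ forces $\phi(G_0\cap P_1)\subseteq X$ and $\phi(G_0\cap P_2)\subseteq Y$, or the same with $X$ and $Y$ swapped. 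In every case the positions of $\max U$ and $\min W$ are confined to $B_1\cup B_3$ or to $B_2\cup B_4$, and I would then run a short case analysis on which block contains $\max U$: in each branch the inequality $\max U<\min W$ combined with these confinements forces all of $U$ into a single block or all of $W$ into a single block, contradicting $|U|=p>r$ and $|W|=q\ge p>r$. (For instance, red with $\phi(G_0)\subseteq X$: if $\max U\in B_1$ then $U\subseteq B_1$; if $\max U\in B_3$ then $\min W\in B_3$, hence $\max W\in B_3$, hence $W\subseteq B_3$.)

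The bulk of the work is this final case check — a handful of branches indexed by the color, by which of $X,Y$ receives $\phi(G_0)$ (resp.\ $\phi(G_0\cap P_1)$), and by the block of $\max U$ — and it is essentially mechanical. The one point that needs genuine care, and the place I expect to be the main obstacle in writing the argument cleanly, is handling the degenerate sub-cases (e.g.\ where $\min U$ and $\max U$ land in different blocks of $Y$) so that the ``collapse'' conclusion really does follow; this is also exactly where one uses that \emph{both} part sizes exceed $r$, not just the smaller one, and where the stitched hypothesis (forcing the four extremal vertices into one monochromatic component) is doing the essential work rather than the block arithmetic.
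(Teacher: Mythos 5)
Your proof is correct. It shares the paper's basic strategy --- color $K_{4r}$ by partitioning the vertices into four consecutive blocks of size $r$ and coloring each pair of blocks monochromatically, then use the stitched hypothesis to force the four extremal vertices into a single connected monochromatic subgraph --- but both the specific coloring and the verification differ. The paper colors so that its argument reduces to forbidding four L-shaped three-block patterns in the block array (a framework it then reuses to get the stronger $5r+1$ bounds in its corollaries), and it exhibits a block coloring avoiding those patterns. You instead take the ``parity'' coloring (red inside $X=B_1\cup B_3$ and inside $Y=B_2\cup B_4$, blue between them), whose red graph is a disjoint union of two cliques and whose blue graph is complete bipartite; connectivity plus uniqueness of the bipartition of the connected component then confine $\max U$ and $\min W$, and the interval arithmetic collapses $U$ or $W$ into one block of size $r$. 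This is a clean, self-contained argument, and it is worth noting that your coloring does \emph{not} avoid all four of the paper's block patterns (e.g.\ blocks $(1,2),(1,4),(3,4)$ form a monochromatic blue instance of pattern~1 off the diagonal); your argument survives because you directly exploit the constraint $\max U<\min W$, which the paper's pattern framework only partially uses. The trade-off is that the paper's machinery generalizes to its $5r+1$ corollaries, whereas your verification is tailored to this one coloring; but as a proof of Theorem~\ref{mainthm} alone, your route is complete and arguably more transparent.
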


\begin{proof} Let $A$ be the adjacency matrix of $G$, and let $A_1$ and $A_2$ be the core matrix of $G$ and its transpose, respectively.
To prove the lower bound, we construct a symmetric $N\times N$ red/blue matrix $M$ such that no $(m +n) \times (m+n)$ principal submatrix $A'$ is a monochromatic copy of $A$ in $M$. This corresponds to a 2-coloring of the complete ordered $K_N$ that does not contain a monochromatic copy of $G$.
Since $M$ is symmetric and $A'$ is principal, it suffices to avoid monochromatic copies of $A_1$ above the diagonal of $M$. 
We achieve this by block coloring: when $N$ is a multiple of $r$, $M$ is \emph{$r \times r$ block 2-colored} when each $r \times r$ block of entries is monochromatic.

Suppose that $M$ is $r\times r$ block 2-colored and has a principal monochromatic copy $A'$ of $A$.
Let $A_1'$ and $A_2'$ be the monochromatic copies of $A_1$ and $A_2$ in $A'$.
Recall that $A_1'$ is on or above the diagonal and is completely above and to the right of every entry in $A_2'$.
Also $A_2'$ is located below the diagonal in the positions reflecting $A_1'$. 

Let $X$ denote the set of first and last vertices of the parts in the interval 2-coloring of $G$.
Since $G$ is a stitched 2-ichromatic ordered graph, the four vertices of $X$ are connected by paths. 
Consider only those entries in $A_1$ that correspond to the edges of these paths; let $C$ be the set of these entries. 
These edges form a connected subgraph of $G$, and there exists at least one entry in $C$ in each of the first and last columns and the first and last rows of $A_1$. 
Let $C'$ be the corresponding entries in $A_1'$.

Now consider the $r \times r$ blocks of $M$ that contain entries in $C'$; recall that $r = \min(m,n)-1$.
Since $A'$ is a principal monochromatic copy of $A$, these blocks have the same color in $M$.
Since $C'$ has entries in the extreme rows and columns of $A_1'$, and $A_1'$ spans $m$ rows and $n$ columns, $C'$ cannot be contained within a single row or column of blocks. 
Because the subgraph corresponding to $C'$ is connected, there is a ``path of blocks" joining any two blocks used by $C'$, meaning that at each step only the row or only the column is changed.
Since $C'$ cannot be captured in a single row or column of blocks, there is thus such a path with two steps, one row change and one column change, which yields one of the four patterns in Figure \ref{patt}.
(Note that the blocks in a pattern need not be contiguous.) 
We conclude that if an $r \times r$ block 2-colored matrix $M$ does not have a monochromatic instance of one of these four block patterns on or above the diagonal, then we will not have a monochromatic copy of $A$. 

\begin{figure}[h]
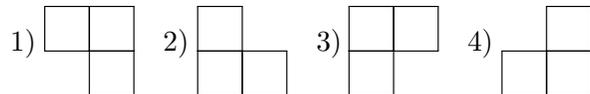

$$1)\; \ydiagram{0+2,1+1}\quad 
2)\; \ydiagram{1,2} \quad 
3)\: \ydiagram{2,1} \quad
4)\; \ydiagram{1+1, 2} $$
\caption{Forbidden $r\times r$ block patterns in $M$}\label{patt}
\end{figure}

We claim further that the three blocks we have extracted from $C'$ cannot occur in pattern 1 using a diagonal block, and hence we do not need to avoid such instances of pattern 1 to establish the lower bound. Consider such an instance.
If one of these three blocks is on the diagonal, then it must be the upper left or lower right block. By symmetry we may assume it is the upper left block, let this be $B$.
Let $x$ be an entry of $C'$ in $B$, and let $y$ be an entry of $C'$ in the lower right block. 
Because $A'$ is principal, the reflection of $C'$ through the diagonal is contained in $A_2'$. When we reflect $x$ to $\hat{x} \in A_2'$, $\hat{x}$ is still in $B$, but the original entry $y \in A_1'$ is in a lower row of blocks. 
Hence the entry $y \in A_1'$ is not above the entry $\hat{x} \in A_2'$, which contradicts the structure of matrix $A$ shown in Figure \ref{M}. 

When $N = 4r$, the block coloring of an upper triangular matrix shown in Figure \ref{mainavoid} avoids the four patterns in Figure \ref{patt}, except for an instance of pattern 1 using the diagonal. For later use, we may say that the color used on the upper right block is color 1 (red). Therefore, $M$ avoids $A$, and the corresponding 2-edge-coloring of the ordered complete graph on $4r$ vertices avoids the ordered graph $G$.
\end{proof}

\begin{figure}[h]
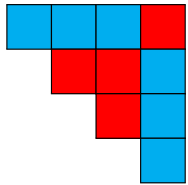
\centering
$\begin{ytableau}
*(cyan) & *(cyan) & *(cyan) & *(red) \\
\none  & *(red) & *(red) & *(cyan) \\
\none  & \none  & *(red) & *(cyan) \\
\none  & \none  & \none  & *(cyan)
\end{ytableau}$
\caption{Block coloring that avoids the forbidden patterns} \label{mainavoid}
\end{figure}


The lower bound of $4r+1$ in Theorem \ref{mainthm} relies on the necessity of avoiding monochromatic instances of all four block patterns in Figure \ref{patt}. We can improve the bound if avoiding the specific graph only requires us to avoid certain subsets of these patterns: If we only need to avoid pattern 2, then we improve the lower bound to $5r+1$, and we can also obtain a lower bound of $5r+1$ when avoiding just patterns 3 and 4. The block colorings that prove these claims are shown in Figure \ref{avoid}. The coloring in Figure \ref{avoid}a appears in \cite{balkocibulka}.

\begin{figure}[h]
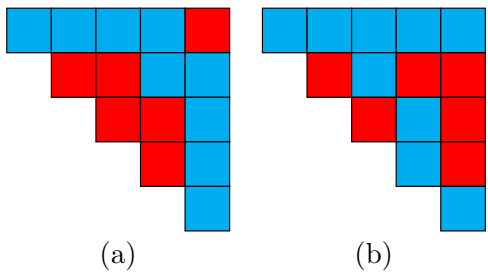
\centering
\begin{tabular}{cc}
$\begin{ytableau}
*(cyan) & *(cyan) & *(cyan) & *(cyan) & *(red) \\
\none  & *(red) & *(red)& *(cyan) & *(cyan) \\
\none  & \none  & *(red) & *(red) & *(cyan) \\
\none & \none  & \none  & *(red) & *(cyan) \\
\none  & \none  & \none  &  \none & *(cyan) 
\end{ytableau}$  & 
 $\begin{ytableau}
*(cyan) & *(cyan) & *(cyan) & *(cyan) & *(cyan) \\
\none  & *(red) & *(cyan) & *(red) & *(red) \\
\none  & \none  & *(red) & *(cyan) & *(red) \\
\none  & \none  & \none  & *(cyan) & *(red) \\
\none  & \none  & \none  &  \none & *(cyan) 
\end{ytableau}$ \\
(a) & (b)
\end{tabular}
\caption{Block colorings that (a) avoid patterns 3 and 4, and (b) avoid pattern 2. }\label{avoid}
\end{figure}

Motivated by this, we determine some of the graphs that require either pattern 3 or 4 to cover the nonzero entries in their matrix, as well as some of the graphs that require pattern 2. Thus we obtain lower bounds on Ramsey numbers for these ordered graphs. 

Corollary \ref{altcor} directly extends the result of \cite{balkocibulka} that the Ramsey number of the alternating path is at least $5r +1$, where $r$ is one less than half the number of vertices in the path \cite{balkocibulka}, to a larger family of graphs containing the alternating path.


\begin{cor}\label{altcor} Let $G$ be a stitched 2-ichromatic ordered graph with parts $v_1, \ldots, v_m$ and $v_{m+1}, \ldots, v_{m+n}$. If $G$ has edges $v_1v_{m+n}$ and $v_mv_{m+1}$, then $R(G) \geq 5r+1$, where $r = \min(m,n)-1$.
\end{cor}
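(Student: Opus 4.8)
The plan is to follow the proof of Theorem~\ref{mainthm} verbatim up to the point where one extracts a forbidden block pattern, and then to use the two ``corner edges'' $v_1v_{m+n}$ and $v_mv_{m+1}$ to show that any monochromatic copy of $A$ in an $r\times r$ block $2$-colored matrix must contain a monochromatic instance of pattern~3 or of pattern~4 \emph{specifically}, not merely one of the four patterns of Figure~\ref{patt}. Granting this, the block coloring of Figure~\ref{avoid}(a) with $N=5r$ avoids patterns~3 and~4, hence produces a $2$-edge-coloring of $K_{5r}$ with no monochromatic copy of $G$, and so $R(G)\ge 5r+1$.

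To set up the forcing step, suppose $M$ is $r\times r$ block $2$-colored with a principal monochromatic copy $A'$ of $A$, and let $A_1'$ be the copy of the core matrix on or above the diagonal, occupying rows $i_1<\cdots<i_m$ and columns $j_{m+1}<\cdots<j_{m+n}$ of $M$. The edge $v_1v_{m+n}$ is the top-right entry of the core matrix and $v_mv_{m+1}$ is the bottom-left entry, so their copies in $A_1'$ sit at the positions $(i_1,j_{m+n})$ and $(i_m,j_{m+1})$. Since $i_m-i_1\ge m-1\ge r$ and $j_{m+n}-j_{m+1}\ge n-1\ge r$, the first copy lies in a strictly smaller block-row and strictly larger block-column than the second; moreover every block meeting $A_1'$ lies in the rectangular array of blocks whose corner blocks are exactly these two. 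Now take $C$ to consist of the edges $v_1v_{m+n}$ and $v_mv_{m+1}$ together with a path joining $v_1$ to $v_m$, which exists because $G$ is stitched (so $v_1,v_m,v_{m+1},v_{m+n}$ lie in one component). The image $C'$ of $C$ inside $A_1'$ is then a connected set of entries whose associated blocks include the two corner blocks just described and, because $A'$ is monochromatic, all share one color.

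The heart of the matter is the following claim about blocks: if a block-connected set $S$ contained in a rectangular array of blocks contains the top-right and bottom-left corner blocks of that array, then $S$ contains three blocks in the shape of pattern~3 or of pattern~4. To prove it, take a \emph{simple} path of blocks in $S$ from the top-right corner block to the bottom-left corner block, each step changing only the block-row or only the block-column. This path begins at the minimum block-row and maximum block-column of the array and ends at a larger block-row and a smaller block-column, so it makes at least one row change and at least one column change. If its first step is a column change, consider the segment up to (and including) its first row change: along this segment the block-row stays at the minimum while the block-column strictly decreases (simplicity forbids returning to the maximum column, since that block was already used), and the final step strictly increases the block-row; the starting block, the last block before leaving the starting block-row, and the block reached after leaving it then form pattern~3. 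If instead the first step is a row change, the mirror-image argument through the first column change yields pattern~4. All three extracted blocks lie in $S$, hence are monochromatic, which finishes the forcing step and the corollary.

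The main obstacle is precisely this claim: one must rule out that every available two-step ``turn'' between the two corner blocks realizes pattern~1 or~2. This is where the hypothesis is essential—both distinguished entries land at the extreme block-row and block-column of the region carrying $C'$, which rigidly orients every turn (starting from the top-right corner, one can only decrease the block-column and only increase the block-row), forcing exactly patterns~3 and~4. The remaining points are routine: the three extracted blocks are genuinely distinct and L-shaped (two of them share only a block-row, and the third shares only a block-column with one of them); and Figure~\ref{avoid}(a) contains no monochromatic pattern~3 or~4 whatsoever, so—unlike in the proof of Theorem~\ref{mainthm}—no separate treatment of patterns using a diagonal block is required.
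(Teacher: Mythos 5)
Your proposal is correct and follows essentially the same route as the paper: the two corner edges force the extreme blocks met by $C'$ into diagonally opposite positions, stitchedness gives a path of blocks from the top-right block to the bottom-left block, the first turn of that path yields pattern~3 or pattern~4, and the coloring of Figure~\ref{avoid}(a) then gives the bound $5r+1$. The only quibble is the parenthetical claim that the block-column \emph{strictly decreases} before the first row change---it need only avoid returning to the maximum column, which is all your argument actually uses---so nothing is lost.
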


\begin{proof} Let $A_1$ be the core matrix of $G$, and as in Theorem \ref{mainthm} consider an $r \times r$ block 2-colored matrix $M$, which contains a copy $A_1'$ of $A_1$ above the diagonal. Edges $v_1v_{m+n}$ and $v_mv_{m+1}$ of $G$ correspond to nonzero entries in the top right corner and lower left corner of $A_1$. 
Since $A_1'$ spans at least $m$ rows and $n$ columns, one cannot cover these two entries with only blocks in the same row or column.
Hence the extreme blocks intersected by $A_1'$ must be two diagonally placed blocks, $B_1$ to the upper right and $B_2$ to the lower left. 

Since $G$ is stitched, it has paths connecting the vertices $\{v_1, v_m, v_{m+1}, v_{m+n}\}$. Let $C$ be the collection of entries of $A_1$ that correspond to the edges of these paths, and let $C'$ be the corresponding entries of $A_1'$.
As in the proof of Theorem \ref{mainthm}, since the graph corresponding to $C'$ is connected, there is a path of blocks joining any two blocks used by $C'$. 
Since $B_1$ is the top and rightmost block used by $C'$, and there must be a path of blocks to reach $B_2$, the path first steps downward or leftward, and when the path first takes a step in the other direction, we obtain pattern 3 or pattern 4. 
The $r \times r$ block 2-colored $5r \times 5r$ matrix shown in Figure \ref{avoid}a avoids pattern 3 and pattern 4 above the diagonal. Thus it yields a 2-edge-coloring of the ordered complete graph on $5r$ vertices that does not contain a monochromatic copy of the ordered graph $G$.
\end{proof}


Corollary \ref{lacecor} proves the same lower bound for a different family of ordered graphs.

\begin{cor}\label{lacecor} Let $G$ be a 2-ichromatic ordered graph with parts $v_1, \ldots, v_m$ and $v_{m+1}, \ldots, v_{m+n}$. If $G$ has edges $v_1v_{m+1}$, $v_mv_{m+n}$, and $v_mv_{m+1}$, then $R(G) \geq 5r+1$, where $r = \min(m,n)-1$. 
\end{cor}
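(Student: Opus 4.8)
The plan is to follow the strategy of Theorem~\ref{mainthm} and Corollary~\ref{altcor}, translating the problem into block colorings of matrices. I would construct a symmetric $N\times N$ red/blue matrix $M$ with $N=5r$ that is $r\times r$ block $2$-colored and contains no principal monochromatic copy of the adjacency matrix $A$ of $G$; by symmetry of $M$ and principality of the copy, it suffices to ensure that no monochromatic copy $A_1'$ of the core matrix $A_1$ occurs above the diagonal of $M$. The point of the hypothesis is that, unlike in Corollary~\ref{altcor}, we do not need $G$ to be stitched: the three prescribed edges directly supply three nonzero entries of $A_1'$ in the right places, so the connectivity/``path of blocks'' argument is not needed here.

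Concretely, in the core matrix $A_1$ (rows indexed by $v_1,\dots,v_m$, columns by $v_{m+1},\dots,v_{m+n}$) the edge $v_1v_{m+1}$ gives the top-left corner entry, $v_mv_{m+n}$ gives the bottom-right corner entry, and $v_mv_{m+1}$ gives the bottom-left corner entry; all three are nonzero. Suppose $A_1'$ occupies rows $\rho_1<\cdots<\rho_m$ and columns $\sigma_1<\cdots<\sigma_n$ of $M$ (so $\rho_m<\sigma_1$, as in Figure~\ref{M}). Since $r=\min(m,n)-1$, we have $\rho_m-\rho_1\ge m-1\ge r$ and $\sigma_n-\sigma_1\ge n-1\ge r$, so $\rho_1,\rho_m$ fall in distinct block-rows, say $a<b$, and $\sigma_1,\sigma_n$ fall in distinct block-columns, say $c<d$. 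Then the three corner entries lie in the three distinct blocks $(a,c)$, $(b,c)$, $(b,d)$, which are arranged as pattern~$2$ of Figure~\ref{patt}; moreover all three blocks lie on or above the diagonal since every entry of $A_1'$ does. Because $A_1'$ is a monochromatic copy, these three blocks share a color, so $M$ has a monochromatic instance of pattern~$2$ on or above the diagonal. Hence any $r\times r$ block $2$-coloring of $M$ that avoids monochromatic pattern~$2$ on or above the diagonal avoids $A$, and therefore gives a $2$-edge-coloring of the ordered complete graph on $N$ vertices with no monochromatic copy of $G$.

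To finish, I would invoke the $5r\times 5r$ block coloring in Figure~\ref{avoid}b, which avoids pattern~$2$; this yields $R(G)\ge 5r+1$. The step requiring the most care is the interaction with the diagonal: the ``elbow'' block $(b,c)$ carrying the bottom-left corner entry can genuinely be a diagonal block of $M$ --- there is no reflection argument ruling this out, as there was for pattern~$1$ in Theorem~\ref{mainthm} --- so one must check that the coloring of Figure~\ref{avoid}b contains no monochromatic pattern~$2$ even when its elbow lies on the diagonal. This is a finite verification over the $5\times5$ block array, and it is the only place where the specific choice of coloring, rather than the general reduction, does the work.
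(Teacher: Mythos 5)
Your proposal is correct and follows essentially the same route as the paper: the three hypothesized edges give the top-left, bottom-left, and bottom-right corner entries of the core matrix, which (since $r=\min(m,n)-1$) must land in three distinct blocks forming pattern~2, and the coloring of Figure~\ref{avoid}b avoids pattern~2 on or above the diagonal. Your extra care about the elbow block possibly lying on the diagonal is a valid point that the paper handles implicitly by asserting the coloring avoids pattern~2 \emph{on} or above the diagonal.
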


\begin{proof} Let $A$ be the core  matrix of $G$. The specified edges of $G$ correspond to nonzero entries in the top left, bottom left, and bottom right corners of $A$. Since $r = \min(m,n)-1$, one must use two blocks in one column of blocks to cover the top left entry and bottom left entry of $A$, and then use a third block to the right of the bottom block in the same row of blocks to cover the lower right entry. Thus, to cover the nonzero entries with $r \times r$ blocks, one must use pattern 2 from Figure \ref{patt}. The $r \times r$ block 2-colored $5r \times 5r$ matrix shown in Figure \ref{avoid}b avoids pattern 2 on or above the diagonal. Thus it yields an edge-coloring of the ordered complete graph on $5r$ vertices that does not contain a monochromatic copy of $G$. 
\end{proof}

\begin{remark} By case analysis, we can show that it is not possible to 2-color a triangular configuration with more than five rows and columns of blocks to avoid pattern 2 or avoid patterns 3 and 4. It is also not possible to 2-color a triangular configuration with more than four rows and columns of blocks to avoid any other subset of the four patterns in Figure \ref{patt}.
Thus we cannot enforce larger lower bounds than $4r+1$ with this block coloring technique for any other subset of the four block patterns, and we cannot enforce larger lower bounds than $5r+1$ for the two families in Corollaries \ref{altcor} and \ref{lacecor}. 
\end{remark}


\section{Exact Values for Special Families}\label{exact}

The lower bounds presented in Corollaries \ref{altcor} and \ref{lacecor} are larger than the Ramsey numbers for the ordered graphs consisting of only the matching $M$ consisting of $\{v_1v_{m+n}, v_mv_{m+1}\}$, even though they differ by perhaps only one edge. 
For example, an ordered graph treated by Corollary \ref{altcor} can be obtained by adding just one edge to $M$ to make the graph stitched.
We consider two specific examples among 2-ichromatic ordered graphs with color classes of sizes $m$ and $n$. 

\begin{figure}[h]\centering
\begin{tabular}{cc}
\includegraphics[scale=.3]{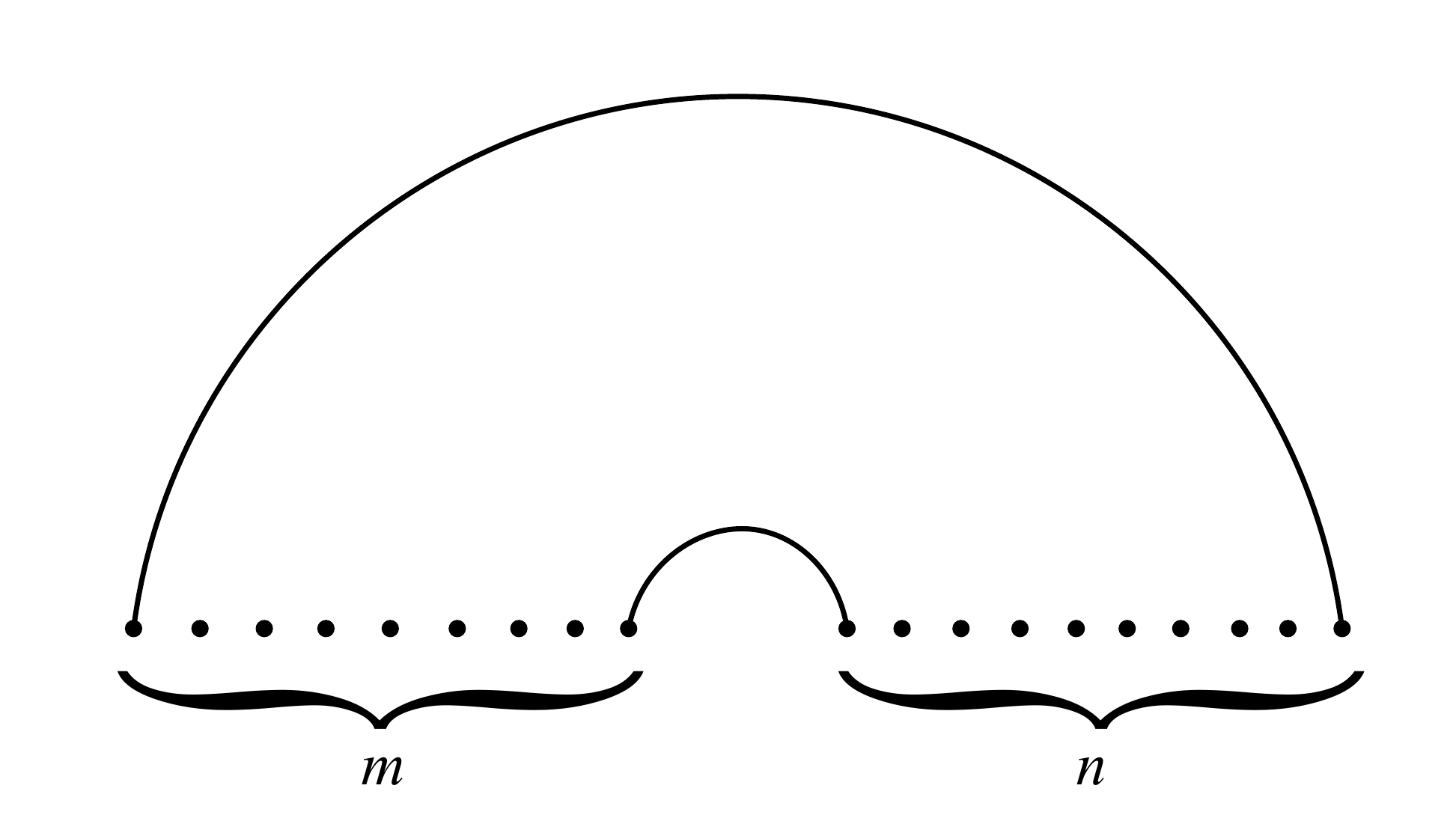} &
\includegraphics[scale=.3]{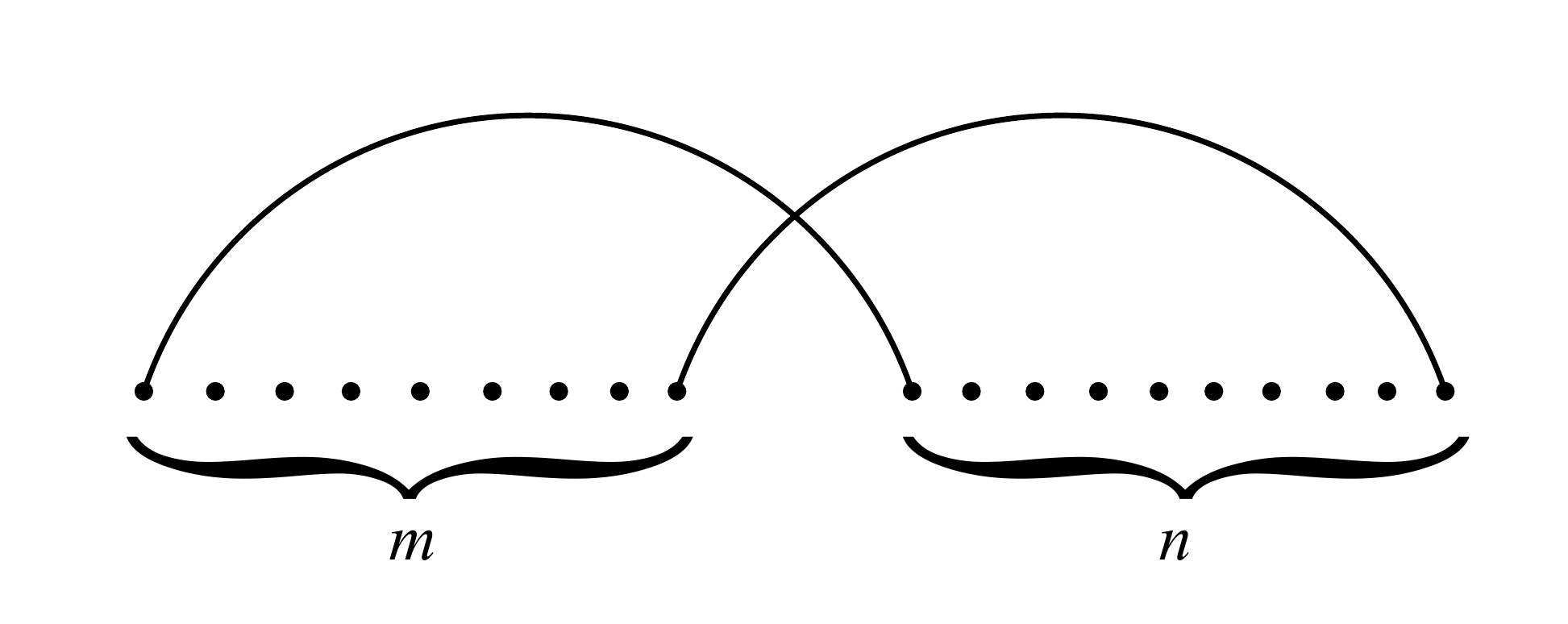} \\
(a) & (b)
\end{tabular}
\caption{2-ichromatic ordered graphs with (a) nested edges or (b) crossing edges.} \label{graphs}
\end{figure}


\begin{prop} \label{nestprop}
If $G$ is the $2$-ichromatic ordered graph with parts of size $m$ and $n$ and two nested edges, one joining the outermost vertices and one joining the innermost vertices (see Figure \ref{graphs}a), then $R(G) = 2m+2n-2$.
\end{prop}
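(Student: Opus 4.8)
The plan is to work directly with the combinatorial structure of $G$, which has only the two edges $v_1v_{m+n}$ and $v_mv_{m+1}$ while its other $(m-2)+(n-2)$ vertices are isolated (so I take $m,n\ge 2$, the case the statement intends). The crucial first step is a clean description of monochromatic copies of $G$: an order-preserving embedding of $G$ into an ordered $K_N$ must send $v_1,v_m,v_{m+1},v_{m+n}$ to positions $a<b<c<d$ with $b\ge a+m-1$ and $d\ge c+n-1$ (the isolated vertices of a part only need enough room, namely $m-2$ positions strictly between $a$ and $b$ and $n-2$ positions strictly between $c$ and $d$), and conversely every such quadruple extends to an embedding. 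Hence a $2$-colored ordered $K_N$ contains a copy of $G$ in color $\chi$ if and only if it has a $\chi$-colored edge $\{a,d\}$ with $d-a\ge m+n-1$ together with a $\chi$-colored edge $\{b,c\}$ whose endpoints both lie in $I(a,d):=\{a+m-1,\dots,d-n+1\}$. Call an edge \emph{long} if its endpoints differ by at least $m+n-1$; note that $I(a,d)$ consists of $d-a-m-n+3$ consecutive vertices, so the endpoints of any edge contained in $I(a,d)$ differ by at most $d-a-m-n+2$.

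For the lower bound $R(G)\ge 2m+2n-2$, I would take $N=2m+2n-3$ and color $K_N$ by making every long edge red and every short edge blue. There is no long blue edge, so there is no blue copy of $G$. A red copy of $G$ would require a red (hence long) outer edge $\{a,d\}$ and a red inner edge inside $I(a,d)$; but the endpoints of such an inner edge differ by at most $d-a-m-n+2\le (N-1)-m-n+2=m+n-2$, so it is short, hence blue, a contradiction. Thus this coloring of $K_{2m+2n-3}$ has no monochromatic $G$.

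For the upper bound $R(G)\le 2m+2n-2$, I would take $N=2m+2n-2$, fix a $2$-coloring, and suppose no monochromatic $G$ occurs. The edge $\{1,N\}$ has endpoints differing by $N-1=2m+2n-3\ge m+n-1$, so it is a legitimate outer edge; say it is red. By the characterization there can then be no red edge with both endpoints in $I(1,N)=\{m,\dots,2m+n-1\}$, so all edges among these $m+n$ consecutive vertices are blue. Since an ordered complete graph on $m+n$ vertices contains every ordered graph on $m+n$ vertices, this blue $K_{m+n}$ contains a blue copy of $G$, a contradiction. Hence every $2$-coloring of $K_{2m+2n-2}$ contains a monochromatic $G$, and together with the lower bound this gives $R(G)=2m+2n-2$. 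The only real content is the first paragraph: once copies of $G$ are recognized as ``a long edge together with an edge nested inside its interior,'' both bounds are immediate, and the one thing to watch is the bookkeeping of the thresholds ($m+n-1$ for the outer edge, $m+n-2$ for the forced-short inner edges) so that the two estimates meet exactly at $2m+2n-2$. (Everything can equivalently be phrased via the core matrix of $G$, the $m\times n$ matrix with a single $1$ in its top-right and bottom-left corners, these colorings then becoming threshold/block colorings of a symmetric $\{0,1\}$-matrix as in Theorem~\ref{mainthm}.)
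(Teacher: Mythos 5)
Your proof is correct, but both halves take a genuinely different route from the paper's. Your key move is the clean reduction in the first paragraph: a monochromatic copy of $G$ is exactly a long edge $\{a,d\}$ (with $d-a\ge m+n-1$) plus a same-colored edge nested in the interval $I(a,d)=\{a+m-1,\dots,d-n+1\}$, and the arithmetic ($|I(a,d)|=d-a-m-n+3$, so nested edges have length at most $d-a-m-n+2$) is right. For the upper bound the paper instead exhibits three pairwise-nested edges in $K_{2m+2n-2}$ (namely $v_1v_{2m+2n-2}$, $v_mv_{2m+n-1}$, $v_{2m-1}v_{2m}$) and applies pigeonhole to get two of the same color; you use only the single edge $\{1,N\}$ and observe that if it is red then all $\binom{m+n}{2}$ edges inside $I(1,N)$ must be blue, yielding a blue $K_{m+n}$ and hence a blue $G$. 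Your version is shorter and arguably slicker, though the paper's multi-edge pigeonhole template is the one that scales to the crossing-edge case of Proposition \ref{crossprop}, where five edges are needed. For the lower bound the paper colors by endpoint location (red iff an endpoint lies among the first $m-1$ or last $n-1$ vertices), while you color by edge length (red iff the length is at least $m+n-1$); these are different colorings of $K_{2m+2n-3}$ but both avoid a monochromatic $G$ for essentially the same reason, and your length-threshold version follows immediately from your reduction. The only caveat is your (reasonable, and surely intended) standing assumption $m,n\ge 2$, without which the two edges of $G$ are not genuinely nested.
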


\begin{proof}
\emph{Upper Bound.}
Consider a 2-edge-coloring of the ordered complete graph whose $2m+2n-2$ vertices are $v_1, \ldots, v_{2m+2n-2}$. Consider the three edges $v_1 v_{2m+2n-2}$, $v_m v_{2m+n-1}$, and $v_{2m-1}v_{2m}$, as shown in Figure \ref{proofgraph1}. Two of these three edges have the same color, and these two edges along with $m-2$ vertices between their left endpoints and $n-2$ vertices between their right endpoints form a copy of $G$. Thus $R(G) \leq 2m+2n-2$.

\begin{figure}[h]\centering
\vcenteredinclude{scale=.4}{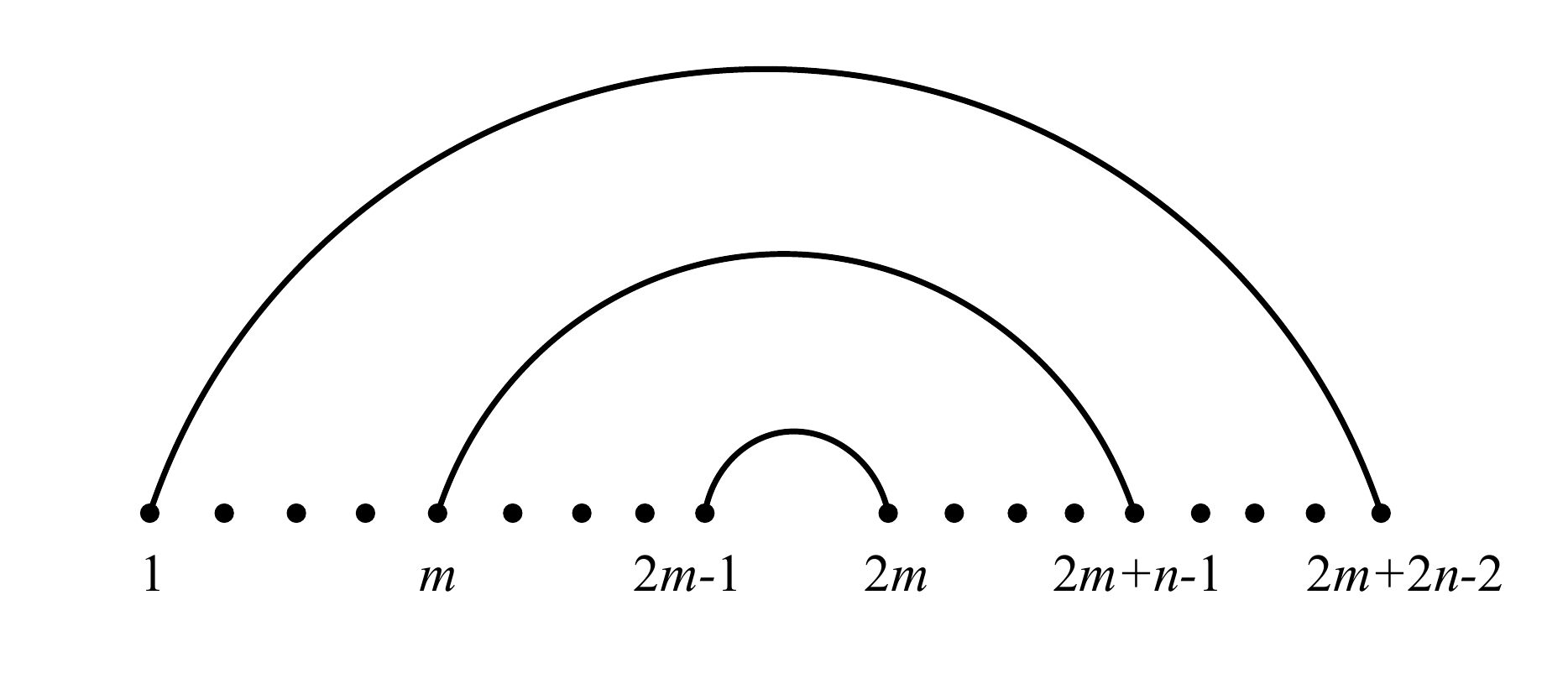}
\caption{Particular edges in an ordered graph with $2m+2n-2$ edges} \label{proofgraph1}
\end{figure}

\emph{Lower Bound.} 
Given the ordered complete graph with $2m+2n-3$ vertices, we construct a coloring avoiding $G$. 
Assign red to each edge having an endpoint among either the first $m-1$ vertices or the last $n-1$ vertices; assign blue to all other edges. 
We claim that this coloring avoids $G$. Any two red nested edges have left endpoints at most $m-2$ apart from each other or right endpoints at most $n-2$ apart from each other, so there is no red $G$. All blue edges are contained within a subgraph having only $m+n-1$ vertices, so there is no blue $G$. Thus $R(G) > 2m+2n-3$. 
\end{proof}


\begin{prop}\label{crossprop}
If $G$ is the $2$-ichromatic ordered graph with parts of size $m$ and $n$ and two crossing edges, one joining the first vertices in each part and one joining the last vertices in each part (see Figure \ref{graphs}b), then $R(G) = m+n+\max(m,n)-1$.
\end{prop}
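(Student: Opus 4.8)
The plan is to mirror the structure of Proposition~\ref{nestprop}: produce a short counting argument for the upper bound and an explicit block-style coloring for the lower bound. Assume without loss of generality that $m \ge n$, so the target value is $2m+n-1$.

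\emph{Upper bound.} First I would take a 2-edge-colored ordered complete graph on $2m+n-1$ vertices $v_1,\ldots,v_{2m+n-1}$ and single out two candidate edges whose left endpoints are at least $m-1$ apart and whose right endpoints are at least $n-1$ apart, yet whose ``spans'' overlap so that they cross. The natural choice is $v_1 v_m$ together with $v_m v_{2m+n-1}$ — but these share a vertex, so instead I would use three edges and a pigeonhole step as in Proposition~\ref{nestprop}: for instance $v_1 v_{m}$, $v_{m} v_{2m+n-1}$ is degenerate, so more carefully take the three crossing edges $v_1 v_{m+1}$, $v_m v_{m+1}$... the key is to exhibit a fixed family of pairs of crossing edges such that in any 2-coloring two of them get the same color and can be completed to a monochromatic $G$ by inserting $m-2$ further vertices on the left and $n-2$ on the right. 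I expect a three-edge pigeonhole configuration analogous to Figure~\ref{proofgraph1} to work, with the index arithmetic arranged so that whichever two edges share a color leave enough room on both sides; verifying that the room is exactly $2m+n-1$ (and not larger) is the routine but delicate part.

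\emph{Lower bound.} For the lower bound I would construct a 2-coloring of $K_{m+n+\max(m,n)-2}$ avoiding a monochromatic $G$. The model is the lower-bound coloring in Proposition~\ref{nestprop}: partition the vertex set into three intervals of sizes roughly $\max(m,n)-1$, then a middle block, then $\min(m,n)-1$, and color an edge red if it lies ``too far to one side'' to support a crossing pair and blue otherwise. Concretely, red edges should be those incident to one of the first $m-1$ vertices or one of the last $n-1$ vertices (as in Proposition~\ref{nestprop}), and blue edges the rest. Then a red crossing copy of $G$ would need its two crossing edges to have first-part vertices within $m-2$ of each other or last-part vertices within $n-2$ of each other, which is impossible because a crossing $G$ forces the first-part vertices of the two edges to be $m-1$ apart and the last-part vertices $n-1$ apart; and the blue edges all lie inside an interval too short to contain $G$. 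One must check the arithmetic gives exactly $m+n+\max(m,n)-2$ vertices covered, which is where the $\max$ enters.

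\emph{Main obstacle.} The substantive point, and the place I expect to spend the most care, is the interaction between the \emph{crossing} requirement and the ordering: unlike the nested case, the two crossing edges have their left endpoints and right endpoints interleaved, so the ``forbidden'' configuration is a genuinely different region of the index grid, and the block coloring that kills it is not symmetric in $m$ and $n$ — hence the $\max(m,n)$ term. Getting the off-by-one boundaries exactly right in both directions (so that the constructed coloring on $m+n+\max(m,n)-2$ vertices really avoids $G$, while $m+n+\max(m,n)-1$ vertices forces it) is the crux; everything else is bookkeeping parallel to Proposition~\ref{nestprop}.
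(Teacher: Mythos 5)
Both halves of your proposal have concrete problems, and in both cases the paper does something genuinely different from what you sketch. For the upper bound, you never actually exhibit the pigeonhole configuration, and the one you are hoping for---three edges such that \emph{any} two of the same color complete to a copy of $G$---cannot exist on $2m+n-1$ vertices. If $e_1=x_1y_1$, $e_2=x_2y_2$, $e_3=x_3y_3$ with $x_1<x_2<x_3$ and every pair forms a crossing copy, then $x_3\ge x_1+2(m-1)$, the crossing condition for the pair $(e_1,e_3)$ forces $y_1>x_3$, and then $y_3\ge y_1+2(n-1)$ pushes the last endpoint past position $2m+2n-2$, which exceeds $2m+n-1$ whenever $n\ge 2$. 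The paper instead uses \emph{five} edges (two ``long'' ones, $v_1v_{2m}$ and $v_mv_{2m+n-1}$, and three ``short'' ones, $v_1v_{m+1}$, $v_mv_{2m}$, $v_{2m-1}v_{2m+n-1}$), applies the pigeonhole to get three of one color, and does a short case analysis showing every such triple contains a usable pair. That case analysis is the real content of the upper bound and is entirely absent from your sketch.

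For the lower bound, the coloring you propose---red for edges incident to one of the first $m-1$ \emph{or} one of the last $n-1$ vertices---does not avoid a red $G$. Take $N=2m+n-2$ and the positions $p_1=m-1,\,p_2=m,\ldots,p_m=2m-2$ and $q_1=2m-1,\ldots,q_n=2m+n-2$. The edge $p_1q_1$ is red because $p_1\le m-1$, and the edge $p_mq_n$ is red because $q_n$ lies among the last $n-1$ vertices; together they form a red crossing copy of $G$. Your claimed dichotomy (``first-part vertices within $m-2$ or last-part vertices within $n-2$'') fails precisely because one edge can be red via its left end and the other via its right end; this loophole is closed in the nested case of Proposition~\ref{nestprop} (where both right endpoints of the pair would have to sit in the last $n-1$ positions) but not in the crossing case. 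The correct construction, having assumed $m\ge n$, is one-sided: color red exactly the edges meeting the first $m-1$ vertices. Then the ``outer'' crossing edge $p_mq_n$ satisfies $p_m\ge p_1+(m-1)\ge m$, so it can never be red, while the blue edges live on only $m+n-1$ vertices. So your instinct that the construction ``is not symmetric in $m$ and $n$'' was right, but the concrete symmetric coloring you wrote down contradicts it and fails.
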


\begin{proof}  
We may assume $m \geq n$ (otherwise, reverse the ordering).

\emph{Upper Bound.}
Consider a 2-edge-coloring of the ordered complete graph with $2m+n-1$ vertices $v_1, \ldots v_{2m+n-1}$ in order.
Consider two long edges ($v_1v_{2m}$ and $v_mv_{2m+n-1}$) and three shorter edges ($v_1v_{m+1}$, $v_{m}v_{2m}$, and $v_{2m-1}v_{2m+n-1}$), as shown in Figure \ref{proofgraph2}.
Three of these five edges have the same color. 
The two long edges together with the first $m-2$ and last $n-2$ other vertices form a copy of $G$. 
Any two consecutive short edges and the vertices between their endpoints form a copy of $G$. 
If one long edge and the two nonconsecutive short edges have the same color, then the long edge and the short edge incident to the other extreme vertex form a copy of $G$, together with the first $m-2$ and last $n-2$ other vertices. Therefore, there is a monochromatic copy of $G$, and $R(G) \leq 2m+n-1$.

\begin{figure}[h]\centering
\vcenteredinclude{scale=.4}{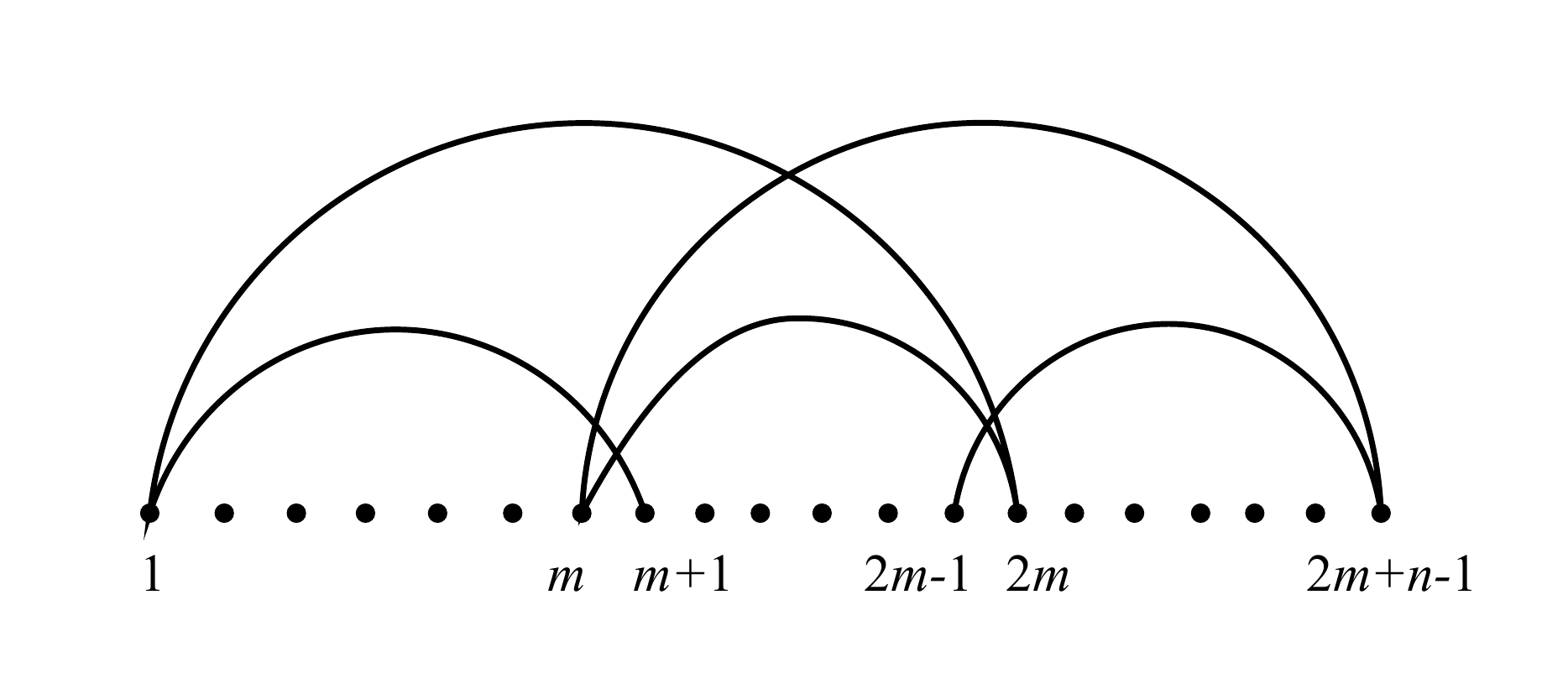}
\caption{Particular edges in an ordered graph with $2m+n-1$ edges.} \label{proofgraph2}.
\end{figure}

\emph{Lower Bound.}
Given the ordered complete graph with $2m+n-2$ vertices, we construct a coloring avoiding $G$. 
Assign red to each edge having an endpoint among the first $m-1$ vertices; assign blue to all other edges. 
We claim that this coloring avoids $G$. 
Any two red crossing edges have their left endpoints at most $m-2$ apart from each other, so there is no red $G$, and all blue edges are contained within a subgraph having only $m+n-1$ vertices, so there is no blue $G$.  Thus $R(G) > 2m+n-2$. 
\end{proof}

Thus, in the case $m=n$, adding one specific edge to either nested edges or crossing edges increases the Ramsey number. We can add an edge to the nested edges from Proposition \ref{nestprop} to make the graph stitched, increasing the Ramsey number from $4n-2$ to at least $5n-4$. Similarly, we can add the edge joining the last vertex in the first part and the first vertex in the second part to the crossing edges from Proposition \ref{crossprop} to make the graph satisfy the hypothesis of Corollary \ref{lacecor}, increasing the Ramsey number from $3n-1$ to at least $5n-4$.


\section{Extensions to $t$-color Ramsey Numbers}\label{extension}

We can also extend our general lower bounds to the $t$-color Ramsey case, where we color the edges from a set of $t$ colors:


\begin{cor}
\begin{enumerate}
\item If $G_1$ is a stitched 2-ichromatic ordered graph with parts of sizes $n$ and $m$, then $R_t(G_1)\geq 2tr+1$, where $r = \min(n,m)-1$.

\item Let $G_2$ be a stitched 2-ichromatic ordered graph with parts $v_1, \ldots, v_m$ and $v_{m+1}, \ldots, v_{m+n}$. If edges $v_1v_{m+n}$ and $v_mv_{m+1}$ are contained in $G_2$, then $R_t(G_2) \geq (2t+1)r+1$, where $r = \min(m,n)-1$.

\item Let $G_3$ be a 2-ichromatic ordered graph with parts $v_1, \ldots, v_m$ and $v_{m+1}, \ldots, v_{m+n}$. If edges $v_1v_{m+1}$, $v_mv_{m+n}$, and $v_mv_{m+1}$ are contained in $G$, then $R_t(G_3) \geq (2t+1)r+1$, where $r = \min(m,n)-1$. .
\end{enumerate}
\end{cor}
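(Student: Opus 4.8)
The plan is to adapt the block-coloring constructions from Theorem \ref{mainthm} and Corollaries \ref{altcor} and \ref{lacecor} from two colors to $t$ colors, replacing the triangular two-color patterns by triangular patterns built out of $t$ colors while preserving the property that the relevant forbidden block patterns never occur monochromatically above the diagonal. The conversion from graphs to matrices is identical to the two-color case: a $t$-edge-coloring of $K_N$ is a symmetric $N\times N$ matrix over the $t$ colors, a monochromatic copy of $G$ corresponds to a principal submatrix in which the nonzero entries of the adjacency matrix $A$ all receive one color, and since $A$ is symmetric and its nonzero part lives in the off-diagonal blocks $A_1,A_2$, it suffices to prevent a monochromatic copy of the core matrix $A_1$ above the diagonal. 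The arguments of Theorem \ref{mainthm}, Corollary \ref{altcor}, and Corollary \ref{lacecor} showing that such a copy forces a monochromatic instance of one of the four block patterns (respectively: one of patterns 1--4 but not pattern 1 on the diagonal; pattern 3 or 4; pattern 2) carry over verbatim, since they only use the stitched hypothesis and the geometry of where $A_1'$ and $A_2'$ sit relative to the diagonal, not the number of colors.

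So the real content is to exhibit, for each part, an $r\times r$ block $t$-coloring of an upper-triangular array of the claimed size that avoids the relevant patterns monochromatically on or above the diagonal. For part (1) I would build an upper-triangular array of $2t$ block-rows and block-columns: intuitively, pair up a ``color-$i$ staircase'' with a complementary superdiagonal strip for each of the $t$ colors, generalizing Figure \ref{mainavoid} (which is the $t=2$ case with $2\cdot2=4$ block-rows). Concretely one takes $t$ disjoint ``L-shaped'' regions stacked along the diagonal, region $i$ occupying block-rows and block-columns $2i-1,2i$, colored so that within region $i$ only color $i$ appears in a pattern-1-free way (a single $2\times2$ upper-triangular block of color $i$ has no monochromatic pattern 1,2,3,4 except pattern 1 through the diagonal, which we showed cannot be realized by the extracted blocks), and so that the entries of block-row $\le 2i$ lying in block-columns $> 2i$ all receive colors $\ne i$ — this is arranged by having the ``above-right'' strip of region $i$ colored with some fixed other color, exactly as the cyan entries in Figure \ref{mainavoid} sit above-right of the red region. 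Then any monochromatic (say color-$i$) set of blocks lies entirely within the union of region $i$'s L-shape, which is too small to contain pattern 2, 3, or 4 and cannot contain a valid pattern-1 instance. For parts (2) and (3) I would do the analogous stacking with $2t+1$ block-rows: replace each region by the width-$2$-ish staircase of Figure \ref{avoid}a (to kill patterns 3 and 4) or the pattern of Figure \ref{avoid}b (to kill pattern 2), with the surrounding above-right strips colored in the other colors so that each color class is confined to one region of size $5$ (for $t=2$, $5\cdot$ one region $=5r$; in general each of the $t$ colors gets a block of size slightly more than $2$, totalling $2t+1$). I would verify in each case that confining a color class to the interior of one Figure-\ref{avoid} gadget is exactly the condition under which patterns 3,4 (resp. 2) were shown to be avoided there.

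The main obstacle I expect is getting the block sizes and the ``gluing'' right: the $t=2$ pictures have $4$ and $5$ block-rows, and the target bounds $2tr$ and $(2t+1)r$ demand that adding one more color costs exactly $2$ more block-rows, so the $t$ gadgets must overlap along the diagonal in a way that reuses one block-row/column per junction (or, more precisely, the gadgets tile an upper-triangular array with no wasted rows) while still keeping every color class within a single gadget. I would handle this by describing the coloring explicitly: index block-rows $1,\dots,2t$ (or $1,\dots,2t+1$), let color $i$ own the diagonal sub-square on rows $\{2i-1,2i\}$ colored as in the $t=2$ gadget restricted appropriately, color all blocks strictly above-and-right of color $i$'s square with colors from $\{i+1,\dots,t\}$ according to the same rule applied recursively, and then check the two things that matter: (a) no block above the diagonal is left uncolored, and (b) for each $i$, the color-$i$ blocks are precisely those in color $i$'s gadget, so no forbidden pattern in color $i$ can span two block-rows outside that gadget. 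Once the picture is pinned down, the pattern-avoidance check is a finite verification essentially identical to the ones already done for Figures \ref{mainavoid} and \ref{avoid}, plus the observation (also already made, in the proof of Theorem \ref{mainthm}) that the extracted blocks cannot form pattern 1 using a diagonal block.
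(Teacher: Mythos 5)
Your reduction to block-pattern avoidance is fine and matches the paper: the arguments that a monochromatic copy of $G$ forces a monochromatic instance of the relevant patterns from Figure \ref{patt} are color-count-independent, so the whole content of the $t$-color extension is exhibiting the block $t$-colorings of the right sizes. That is exactly where your proposal has a genuine gap. Your proposed geometry --- $t$ disjoint $2\times 2$ upper-triangular gadgets on consecutive diagonal block-rows $\{2i-1,2i\}$, one per color, with the off-diagonal blocks given ``other'' colors --- does not work as described. The constraint you impose (blocks in block-rows $\le 2i$ and block-columns $>2i$ avoid color $i$) does not prevent a color-$j$ block from sitting \emph{above} gadget $j$, i.e.\ at a position $(a,b)$ with $a<2j-1$ and $b\in\{2j-1,2j\}$; such a block together with the gadget blocks $(2j-1,2j-1)$ and $(2j-1,2j)$ forms a monochromatic pattern 2, which is forbidden in all three parts (the diagonal exception applies only to pattern 1). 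Symmetrically, a color-$j$ block in block-row $2j$ to the right of the gadget forms pattern 2 with $(2j-1,2j)$ and $(2j,2j)$. If instead you insist that every color-$j$ block lie inside gadget $j$, the off-diagonal blocks have no admissible color at all (already for $t=2$ the super-block between the two gadgets would have to avoid both colors). The ``recursive'' rule you invoke for the above-and-right strips is not well defined on the off-diagonal region and cannot be checked; you yourself flag the gluing as an unresolved obstacle rather than resolving it.

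The paper avoids this by a different geometry, organized as an induction on $t$: keep the two-color constructions of Figures \ref{mainavoid} and \ref{avoid} as a core, and for each new color wrap the existing array in a hook consisting of a new top block-row plus a new right block-column (adding $2$ to the block dimension per color, which gives $2tr$ and $(2t+1)r$). A full hook contains no instance of patterns 2, 3, or 4, so for parts (2) and (3) the new color's class is automatically safe; for part (1) a hook does contain pattern 1 off the diagonal at its corner, so the corner block is handed to color 1, where it is isolated (it shares no block-row or block-column with any other color-1 block) and hence cannot complete a pattern. For part (3) the new layer is a top row plus a diagonal strip rather than a hook, matching the structure of Figure \ref{avoid}(b). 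These placement details --- in particular the excised corner and why it must go to color 1 --- are the actual content of the proof and are absent from your proposal; without them (or a correct substitute) the claimed colorings are not established.
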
 

\begin{proof} Using $t=2$ and Theorem \ref{mainthm}, Corollary \ref{altcor}, and Corollary \ref{lacecor} as the base cases for our three statements, we use induction on $t$. Having already colored the appropriately sized matrix for $t$ colors, we can extend the coloring to $t+1$ colors by adding new rows and columns as in Figure \ref{tcolor}: For statement 1, add a top row and right column of the new color, with one block of color 1 in the top right. For statement 2, add a top row and right row. For statement 3, add a top row and diagonal.

The first matrix coloring (Figure \ref{tcolor}a) avoids monochromatic instances of all four patterns in Figure \ref{patt} (except pattern 1 on the diagonal), the second (Figure \ref{tcolor}b) avoids patterns 3 and 4, and the third (Figure \ref{tcolor}c) avoids pattern 2.  Thus, they avoid $G_1$, $G_2$, and $G_3$ respectively. Each inductive step adds two rows and columns of blocks, and the results follow.
\end{proof}

\begin{figure}[h]
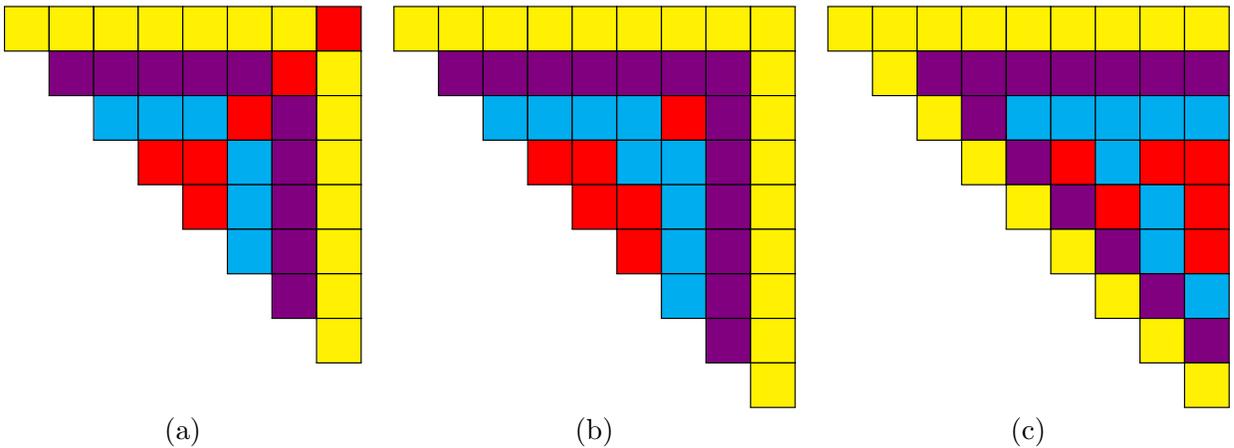
\centering
\begin{tabular}{ccc}
$\begin{ytableau}
*(yellow) & *(yellow) & *(yellow) & *(yellow) & *(yellow) & *(yellow) & *(yellow) & *(red)\\
\none & *(violet) & *(violet) & *(violet)& *(violet) & *(violet) & *(red) & *(yellow)\\
\none & \none & *(cyan) & *(cyan) & *(cyan) & *(red) & *(violet) & *(yellow) \\
\none & \none & \none  & *(red) & *(red)& *(cyan) & *(violet)& *(yellow) \\
\none & \none & \none  & \none  & *(red) & *(cyan) & *(violet) & *(yellow)\\
\none & \none & \none  & \none  & \none  & *(cyan) & *(violet) & *(yellow)\\
\none & \none & \none  & \none  & \none  &  \none & *(violet) & *(yellow) \\
\none & \none & \none  & \none  & \none  &  \none & \none & *(yellow)
\end{ytableau}$
&
$\begin{ytableau}
*(yellow) & *(yellow) & *(yellow) & *(yellow) & *(yellow) & *(yellow) & *(yellow) & *(yellow) & *(yellow)\\
\none & *(violet) & *(violet) & *(violet) & *(violet) & *(violet) & *(violet) & *(violet) & *(yellow)\\
\none & \none & *(cyan) & *(cyan) & *(cyan) & *(cyan) & *(red) & *(violet) & *(yellow)\\
\none & \none & \none  & *(red) & *(red)& *(cyan)& *(cyan) & *(violet) & *(yellow) \\
\none & \none & \none  & \none  &*(red) & *(red) & *(cyan) & *(violet) & *(yellow)\\
\none & \none & \none  & \none  & \none  & *(red) & *(cyan) & *(violet) & *(yellow)\\
\none & \none & \none  & \none  & \none  &  \none & *(cyan) & *(violet) & *(yellow) \\
\none & \none & \none  & \none  & \none  &  \none & \none & *(violet) & *(yellow)\\
\none & \none & \none  & \none  & \none  &  \none & \none & \none & *(yellow)
\end{ytableau}$ 
&
$\begin{ytableau}
*(yellow) & *(yellow) & *(yellow) & *(yellow) & *(yellow) & *(yellow) & *(yellow) & *(yellow) & *(yellow)\\
\none & *(yellow) & *(violet) & *(violet)  & *(violet)  & *(violet)  & *(violet)  &*(violet)&*(violet)\\
\none & \none & *(yellow) & *(violet) & *(cyan) & *(cyan) & *(cyan) & *(cyan) & *(cyan) \\
\none & \none & \none  & *(yellow) & *(violet) & *(red)& *(cyan) & *(red) & *(red) \\
\none & \none & \none  & \none  &*(yellow) & *(violet) & *(red) & *(cyan) & *(red)\\
\none & \none & \none  & \none  & \none  & *(yellow) & *(violet) & *(cyan) & *(red)\\
\none & \none & \none  & \none  & \none  &  \none & *(yellow) & *(violet) & *(cyan) \\
\none & \none & \none  & \none  & \none  &  \none & \none & *(yellow) & *(violet)\\
\none & \none & \none  & \none  & \none  &  \none & \none & \none & *(yellow)
\end{ytableau}$ \\
(a) & (b) & (c)
\end{tabular}
\caption{Block colorings for the 4-color Ramsey extensions.} \label{tcolor}
\end{figure}



Since we have thus far found lower bounds that are linear in the number of vertices for certain 2-ichromatic ordered graphs, it is desirable to also find linear upper bounds for some 2-ichromatic ordered graphs. In proving that the alternating path has linear Ramsey number, Balko, Cibulka, Kr\'{a}l and Kyn\v{c}l \cite{balkocibulka} used minimalist matrices to prove a linear upper bound for the alternating path $P_n^{\text{alt}}$ with $n$ vertices:
$$R(P_n^{\text{alt}}) \leq 2n-4 + \sqrt{2n^2-8n+11}$$ 
In fact, their proof is valid for any minimalist graph with equal part-sizes.
Here we extend this proof to $t$ colors and to 2-ichromatic ordered graphs with general part-sizes:

\begin{prop} If $G$ is a minimalist 2-ichromatic ordered graph with parts size $m$ and $n$, then 
$$R_t(G) \leq t(n+m-2) + \sqrt{t^2(n+m-2)^2 + 2t(3(n+m)-4-2mn)}.$$
\end{prop}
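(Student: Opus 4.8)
The plan is to reduce the $t$-color Ramsey problem for $G$ to a matrix extremal problem and then apply the minimalist hypothesis together with the F\"uredi--Hajnal-style formula for $\text{ex}_A(N,N)$. Let $A$ be the core matrix of $G$, which is $m \times n$; by assumption $A$ is minimalist, so $\text{ex}_A(N,N) = (n-1)N + (m-1)N - (m-1)(n-1)$. Given a $t$-edge-coloring of the ordered complete graph $K_N$, encode it as a symmetric $N \times N$ matrix whose entries are colors. For each color $i$, let $M_i$ be the $\{0,1\}$-matrix that is $1$ exactly on the entries strictly above the diagonal receiving color $i$; then $\sum_{i=1}^t M_i$ is the $0/1$ matrix with a $1$ in every strictly-upper-triangular position, which has $\binom{N}{2}$ ones. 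If no color class contains a monochromatic copy of $G$, then (using the correspondence in Section~2 between a principal monochromatic copy of the adjacency matrix $A$ of $G$ and a copy of the core matrix $A_1=A$ above the diagonal) each $M_i$ avoids $A$ as a submatrix, hence $\|M_i\|_1 \le \text{ex}_A(N,N)$.

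Next I would combine these inequalities. Summing over the $t$ colors gives
$$\binom{N}{2} \;=\; \sum_{i=1}^t \|M_i\|_1 \;\le\; t\,\text{ex}_A(N,N) \;=\; t\bigl((n+m-2)N - (m-1)(n-1)\bigr).$$
Writing $\binom{N}{2} = \tfrac12 N(N-1) = \tfrac12(N^2 - N)$ and clearing the factor of $\tfrac12$, this is a quadratic inequality in $N$:
$$N^2 - N \;\le\; 2t(n+m-2)N - 2t(m-1)(n-1),$$
i.e.
$$N^2 \;-\; \bigl(2t(n+m-2)+1\bigr)N \;+\; 2t(m-1)(n-1) \;\le\; 0.$$
So any $N$ for which a $G$-free $t$-coloring of $K_N$ exists must satisfy this inequality, and therefore $R_t(G)$ is at most the larger root of the corresponding quadratic plus whatever rounding is needed; in fact it suffices to show $N$ exceeds that root forces a monochromatic $G$.

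Then I would solve the quadratic. The larger root of $N^2 - (2t(n+m-2)+1)N + 2t(m-1)(n-1)$ is
$$\frac{2t(n+m-2)+1 + \sqrt{(2t(n+m-2)+1)^2 - 8t(m-1)(n-1)}}{2}.$$
Expanding the discriminant: $(2t(n+m-2)+1)^2 - 8t(m-1)(n-1) = 4t^2(n+m-2)^2 + 4t(n+m-2) + 1 - 8t(m-1)(n-1)$. One checks $(m-1)(n-1) = mn - (m+n) + 1$, so $-8t(m-1)(n-1) = -8t\,mn + 8t(m+n) - 8t$; combining with $4t(n+m-2) = 4t(m+n) - 8t$ gives a linear-in-$t$ term $4t\bigl(3(m+n) - 4 - 2mn\bigr)$. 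Thus the discriminant equals $4t^2(n+m-2)^2 + 4t(3(n+m)-4-2mn) + 1$. Since $\sqrt{4X+1} \ge \sqrt{4X}$ is not quite what is wanted, I would instead bound the larger root $\rho$ from above by dropping the ``$+1$'' appropriately: $R_t(G) \le \lceil \rho \rceil$, and one verifies $\lceil \rho \rceil \le t(n+m-2) + \sqrt{t^2(n+m-2)^2 + 2t(3(n+m)-4-2mn)}$, which is exactly the stated bound (note $4X+1 < (2\sqrt X + 1)^2$ handles the rounding, and the factor $2$ inside the square root appears because $\sqrt{4Y}=2\sqrt Y$). The main obstacle I anticipate is this last bookkeeping step: making the passage from ``the larger root of the quadratic'' to the clean closed form in the statement rigorous, i.e. correctly absorbing the additive constant $1$ in the discriminant and the $+1$ coefficient of $N$ so that the ceiling is genuinely bounded by the displayed expression. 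Everything upstream is a direct application of the minimalist formula and a sum over color classes; the algebra of the discriminant and the rounding is where care is needed, and I would double-check the sign of $3(n+m)-4-2mn$ (it is negative for $m,n\ge 2$, which is fine since it only decreases the bound, but the square root must still be of a nonnegative quantity, which follows because the discriminant of the original quadratic is a perfect-square-ish positive quantity whenever $R_t(G)$ is well-defined).
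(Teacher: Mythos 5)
There is a genuine gap at the very first step of your reduction. You claim that if color $i$ contains no monochromatic ordered copy of $G$, then the $\{0,1\}$-matrix $M_i$ of color-$i$ entries strictly above the diagonal avoids the core matrix $A_1$ as a submatrix. This is false in general: a submatrix copy of $A_1$ in $M_i$ has row indices $i_1<\cdots<i_m$ and column indices $j_1<\cdots<j_n$, and being above the diagonal only forces $i_a<j_b$ at the positions where $A_1$ has a $1$; it does not force $i_m<j_1$. An ordered copy of $G$ requires all $m$ vertices of the first part to precede all $n$ vertices of the second part, so when the rows and columns interleave (e.g.\ for the crossing-edges graph, whose core matrix has $1$s only in positions $(1,1)$ and $(m,n)$, a copy with rows $\{1,3\}$ and columns $\{2,4\}$ is just a non-crossing matching), the submatrix copy of $A_1$ yields no copy of $G$. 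Consequently $\|M_i\|_1\le \mathrm{ex}_{A_1}(N,N)$ does not follow, and the inequality $\binom{N}{2}\le t\,\mathrm{ex}_{A_1}(N,N)$ is unjustified. The paper's proof avoids exactly this trap by working only inside the upper-right $\lceil N/2\rceil\times\lfloor N/2\rfloor$ block (rows from the first half, columns from the second half), where every row index automatically precedes every column index; the price is that only about $N^2/4$ entries are available to the pigeonhole, but the extremal function is also evaluated at half-size arguments, and the two effects combine to give the stated bound.

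Separately, even if your inequality were granted, the final ``one verifies'' step does not go through: the larger root of your quadratic exceeds the stated bound rather than being absorbed by it. For $t=2$, $m=n=3$ your quadratic is $N^2-17N+16\le 0$ with larger root exactly $16$, while the proposition asserts $R_t(G)\le 8+\sqrt{48}\approx 14.93$. The sign observation you make (that $3(n+m)-4-2mn\le 0$) works against you here, since replacing $t(3(n+m)-4-2mn)$ by $2t(3(n+m)-4-2mn)$ \emph{decreases} the quantity under the square root. So both the reduction and the concluding algebra need to be replaced by the half-block argument.
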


\begin{proof} Consider an ordered complete graph with $N$ vertices having a $t$-color edge-coloring that contains no monochromatic $G$.
Consider the adjacency matrix for the coloring, and consider the upper right submatrix $B$ formed by the first $\clg{N/2}$ rows and last $\flr{N/2}$ columns. Because the given coloring has no monochromatic copy of $G$, this matrix $B$ does not contain a monochromatic copy of the core matrix of $G$. Since $G$ is minimalist,
\begin{align*}
\text{ex}_{A_1}(\clg{N/2}, \flr{N/2}) &= (n-1)\clg{N/2} + (m-1)\flr{N/2}-(m-1)(n-1) \\
&\leq \frac{1}{2}(n+m)(N+3) - N - 2 - mn \\
&\leq \left(\frac{1}{2}(n+m) -1\right)N + \left(\frac{3}{2}(n+m) -2-mn \right).
\end{align*}
By the pigeonhole principle, at least $\frac{1}{t}$ of the $\clg{N/2}\flr{N/2}$ edges in $B$ have the same color. Note that $\frac{1}{t}\clg{N/2}\flr{N/2} \geq (N^2-1)/4t$.
To avoid monochromatic copies of $G$, the following inequality must be satisfied.
\begin{align*}
N^2-1 &\leq (2t(n+m)-4t)N + 6t(n+m)-8t-4tmn.
\end{align*}
Now the quadratic formula completes the proof.
\begin{align*}
N &\leq t(n+m-2) + \sqrt{(t(n+m)-2t)^2 + 6t(n+m)-8t-4tmn}.
\qedhere
\end{align*}
\end{proof}

Thus there is a family of 2-ichromatic ordered graphs whose Ramsey number is linear in the number of vertices. 
However, this linear upper bound is only for minimalist 2-ichromatic ordered graphs. 
Finding upper bounds for more 2-ichromatic ordered graphs is a topic for further study.
For example, whenever the extremal number of the core matrix of a 2-ichromatic ordered graph $G$ is subquadratic in the number of vertices, one can use the method above to obtain an upper bound for $R_t(G)$.



\begin{thebibliography}{9}
\bibitem{balkocibulka}
M. Balko, J. Cibulka, K. Kr\'{a}l, and J. Kyn\v{c}l. 
Ramsey numbers of ordered graphs,
preprint available as arXiv:1310.7208v4, 16 Nov 2015. 

\bibitem{conlonfox}
D. Conlon, J. Fox, C. Lee, and B. Sudakov,
Ordered Ramsey numbers,
\textit {J. Combin. Th. (B)} 122 (2017), 353--383.
preprint available as arXiv:1410.5292v2.

\bibitem{CS}
C. Cox and D. Stolee,
Ordered Ramsey numbers of loose paths and matchings,
\textit{Discrete Math.} 339 (2016), 499-505. 

\bibitem{erdosszekeres}
P. Erd\H{o}s and G. Szekeres, 
A combinatorial problem in geometry,
\textit{Compos. Math} 2 (1935), 463-470.

\bibitem{FPSS}
J. Fox, J. Pach, B. Sudakov, and A. Suk,
Erd\H{o}s-Szekeres-type theorems for monotone paths and convex bodies,
\textit{Proc. London Math. Soc.} 105 (2012), 953-982.

\bibitem{furedihaijnal}
Z. F\"{u}redi and P. Hajnal, Davenport-Schinzel theory of matrices,
\textit{Discrete Math.} 103(3) (1992) 233-251.

\bibitem{west} 
K. G. Milans, D. Stolee, and D. B. West,
Ordered Ramsey theory and track representations of graphs.
9 Aug 2012.

\bibitem{MS}
G. Moshkovitz and A. Shapira,
Ramsey theory, integer partitions and a new proof of the Erd\H{o}s-Szekeres Theorem, 
\textit{Advances in Math.} 262 (2014), 1107-1129.

\bibitem{Mub}
D. Mubayi,
Variants of the Erd\H{o}s-Szekeres and Erd\H{o}s-Hajnal Ramsey problems,
preprint available as arXiv:1609.07670, to appear in \textit{European Journal of Combinatorics}.

\bibitem{pachtardos}
J. Pach and G. Tardos,
Forbidden paths and cycles in ordered graphs and matrices,
\textit{Israel Journal of Mathematics} 155 (2006), 359-380.

\bibitem{ramsey}
F.P. Ramsey, 
On a problem of formal logic,
\textit{Proc. Lond. Math. Soc.} 30 (1930), 264-286.
\end{thebibliography}
\end{document}